\documentclass[12pt, reqno]{amsart}
\usepackage{amsmath, amsthm, amscd, amsfonts, amssymb, graphicx, color}
\usepackage[bookmarksnumbered, colorlinks, plainpages]{hyperref}
\hypersetup{colorlinks=true,linkcolor=red, anchorcolor=green, citecolor=cyan, urlcolor=red, filecolor=magenta, pdftoolbar=true}

\textheight 22.5truecm \textwidth 14.5truecm
\setlength{\oddsidemargin}{0.35in}\setlength{\evensidemargin}{0.35in}

\setlength{\topmargin}{-.5cm}

\usepackage{amsfonts}
\usepackage{color,graphicx,shortvrb}
\usepackage{enumerate}
\usepackage{amsmath}
\usepackage{amssymb}
\usepackage{graphicx}

\usepackage{enumerate}
\usepackage{amsmath}
\usepackage{amssymb}

\usepackage[latin1]{inputenc}

\usepackage{amsmath}

\newtheorem{theorem}{Theorem}[section]

\newtheorem{corollary}[theorem]{Corollary}
\newtheorem{lemma}[theorem]{Lemma}

\newtheorem{problem}[theorem]{Problem}
\newtheorem{proposition}[theorem]{Proposition}

\newtheorem{definition}[theorem]{Definition}
\newtheorem{remark}[theorem]{Remark}
\newtheorem{example}[theorem]{Example}

\usepackage{centernot}

\usepackage{tikz}
\usetikzlibrary{arrows,matrix}

\begin{document}

\title[Linear maps that are $^*$-homomorphisms at a fixed point]{Linear maps between C$^*$-algebras that are $^*$-homomorphisms at a fixed point}

\date{\today}

\author[M.J. Burgos]{Mar{\'i}a J. Burgos}

\address{Departamento de Didáctica de la Matemática, Universidad de Granada,
Facultad de Ciencias de la Educación, Campus Universitario Cartuja s/n, 18011, Granada, Spain}
\email{mariaburgos@ugr.es}

\author[J. Cabello S\'anchez]{Javier Cabello S\'anchez}
\address{Departamento de Matem\'aticas. Facultad de Ciencias de la Universidad de Extremadura.
Avenida de Elvas, s/n, 06011, Spain}
\email{coco@unex.es}

\author[A.M. Peralta]{Antonio M. Peralta}

\address{Departamento de An{\'a}lisis Matem{\'a}tico, Universidad de Granada,
Facultad de Ciencias 18071, Granada, Spain}
\email{aperalta@ugr.es}

\thanks{First and third authors partially supported by the Spanish Ministry of Economy and Competitiveness and European Regional Development Fund project no. MTM2014-58984-P and Junta de Andaluc\'{\i}a grant FQM375.
Second author partially supported by the Spanish Ministry of Economy and Competitiveness project MTM2013-45643-C02-01-P and by the Junta de Extremadura GR15152 IV Plan Regional I+D+i, Ayudas a Grupos de Investigación.
}

\keywords{multiplicative mapping at a point; Hua's theorem; linear preservers; $^*$-homomorphism at a point}

\subjclass[2010]{47B49, 46L05, 46L40, 46T20, 47L99}


\begin{abstract} Let $A$ and $B$ be C$^*$-algebras. A linear map $T:A\to B$ is said to be a $^*$-homomorphism at an element $z\in A$ if $a b^*=z$ in $A$ implies $T (a b^*) =T (a) T (b)^* =T(z)$, and $ c^* d=z$ in $A$ gives $T (c^* d) =T (c)^* T (d) =T(z).$ Assuming that $A$ is unital, we prove that every linear map  $T: A\to B$ which is a $^*$-homomorphism at the unit of $A$ is a Jordan $^*$-homomorphism. If $A$ is simple and infinite, then we establish that a linear map $T: A\to B$ is a $^*$-homomorphism if and only if $T$ is a $^*$-homomorphism at the unit of $A$. For a general unital C$^*$-algebra $A$ and a linear map $T:A\to B$, we prove that $T$ is a $^*$-homomorphism if, and only if, $T$ is a $^*$-homomorphism at $0$ and at $1$. Actually if $p$ is a non-zero projection in $A$, and $T$ is a $^*$-homomorphism at $p$ and at $1-p$, then we prove that $T$ is a Jordan $^*$-homomorphism. We also study bounded linear maps that are $^*$-homomorphisms at a unitary element in $A$.
\end{abstract}

\maketitle

\section{Introduction}

Let $T: A\to B$ be a linear mapping between Banach algebras. A tempting challenge for researchers is to determine conditions on a certain set $\mathcal{S}\subset A\times A$ to guarantee that the property \begin{equation}\label{eq mutliplicative at a subset S} T(a b) = T(a) T(b), \hbox{ for every } (a,b)\in \mathcal{S},
\end{equation} implies that $T$ is a homomorphism. Some particular subsets $\mathcal{S}$ give rise to precise notions studied in the literature. For example, given a fixed element $z\in A$, a linear map $T:A\to B$ satisfying \eqref{eq mutliplicative at a subset S} for the set $\mathcal{S}_{z} = \{ (a,b)\in A\times A : a b =z\}$ is called \emph{multiplicative at $z$}.\smallskip

Let $X$ be a Banach $A$-bimodule. A linear map $S : A\to X$ satisfying \begin{equation}\label{eq derivation} S(ab ) = S(a) b + a S(b),
\end{equation} for every $(a,b)\in \mathcal{S}_{z}$ is called \emph{derivable at $z$}. In \cite[Subsection 4.2]{Bre07}, \cite[Theorem 2]{ChebKeLee}, \cite{Houqi,Jing,JingLuLi2002,ZhuXio2002,ZhuXio2005} the authors study linear maps that are derivable at ${0}$. Linear maps that are derivable at a fixed invertible element $z$ are explored in \cite{Zhu2007}, \cite{ZhuXio2007} and \cite{ZhuXion2008}. A remarkable result due to F. Lu proves that if $z$ is a left or right separating point of $X$ and $T: A\to X$ is a continuous linear map, then $T$ is derivable at $z$ if and only if $T$ is a Jordan derivation and satisfies $T(z a)= T(z) a+ zT(a)$ and $T(a z)=T(a) z + a T(z)$, for all $a\in A$ (see \cite{Lu2009}). For more recent contributions on derivable maps at certain points we refer to \cite{ZhangHouQi2014}, \cite{ZhuXiLi} and \cite{ZhuZhao2013}.\smallskip

Back to multiplicative maps at a certain point, J. Zhu, Ch. Xiong, and H. Zhu explored in \cite{ZhuXiZhu} those linear maps on the algebra $M_n$ of all $n\times n$ matrices that are multiplicative at a certain fixed point $z\in M_n$. More concretely, a fixed point $z\in M_n$ is an \emph{all-multiplicative point} if every linear bijection $\Phi$ on $M_n$  with $\Phi (I_n) = I_n$, which is multiplicative at $z$ is a multiplicative mapping, where $I_n$ denotes the unit matrix in $M_n$. The main result in \cite{ZhuXiZhu} proves that every $z\in M_n$ with det$(z) = 0$ is an all-multiplicative point in $M_n$, and if $\Phi$ is a bijective linear map which is multiplicative at $I_n$, then there exists an invertible matrix $b\in M_n$ such that either $\Phi(x) = bxb^{-1}$ for all $x\in M_n$ or $\Phi(x) = bx^{tr} b^{-1}$ for all $x\in M_n$.\smallskip

Mappings that are multiplicative at the identity element between general unital Banach algebras have been recently considered by J. Li and J. Zhou in \cite{LiZhou11}. One of the main results in the just quoted paper, states that every linear mapping $T$ from a unital Banach $A$ algebra into another Banach algebra $B$ which is multiplicative at $1$ and satisfies $T(a)=T(1)T(a)=T(a)T(1)$, for every $a\in A$, is a Jordan homomorphism (see \cite[Theorem 3.1]{LiZhou11}).\smallskip

Throughout this note, for each Banach algebra $A$ we shall consider its natural Jordan product defined by $a\circ b = \frac12 (a b +ba)$. A Jordan homomorphism between C$^*$-algebras is a linear map preserving the natural Jordan products. A Jordan $^*$-homomorphism between C$^*$-algebras $A$ and $B$ is a Jordan homomorphism $T:A\to B$ satisfying $T(x^*)=T(x)^*$ for every $x\in A$.\smallskip

Other previous results can be now rewritten with the notion of multiplicative map at a point. In \cite[Subsection 4.1]{Bre07}, M. Bre\v{s}ar studies additive maps that are multiplicative at $0$. A forerunner of the previous paper is contained in \cite{ChebKeLee}, where M.A. Chebotar, W.-F. Ke and P.-H. Lee describe the bijective additive maps on a prime ring containing non-trivial idempotents that are multiplicative at zero. The latter three authors together with N.C. Wong \cite{ChebKeLeeWong2003} study linear maps between algebras generated by idempotents that are multiplicative at zero. Their results apply to matrix algebras, standard operator algebras, C$^*$-algebras and von Neumann algebras.\smallskip

J. Araujo and K. Jarosz established in \cite{ArJar03b} that every bijective linear map $T$ between spaces of vector-valued continuous functions such that $T$ and $T^{-1}$ are multiplicative at zero is usually automatically continuous. The same authors prove in \cite[Theorem 1]{ArJar03}  that every linear bijection $T$ between standard operator algebras such that $T$ and $T^{-1}$ are multiplicative at zero is automatically continuous and a multiple of an algebra isomorphism.\smallskip

If $A$ is a unital C$^*$-algebra and $B$ is a Banach algebra, J. Alaminos, M. Bre\v{s}ar, J. Extremera, and A. Villena prove in \cite[Theorem 4.1]{AlBreExtrVill} that a bounded linear operator $h : A \to B$ is multiplicative at zero if and only if $h(1) h(xy) = h(x)h(y)$ for all $x, y \in  A$. Other related results are given in \cite{BurSanchez2013,CuiHou2002,CuiHou2004,LeTsaiWong}.\smallskip

Undoubtedly, C$^*$-algebras enjoy nice geometric properties which make them very attractive when studying results on automatic continuity and on $^*$-homomor-phisms. Motivated by the long list of references commented above, and in order to have a more appropriate notion valid in the particular setting of C$^*$-algebras, we introduce in this note the following definition.

\begin{definition}\label{def *homo at a point} Let $T: A\to B$ be a map between C$^*$-algebras, and let $z$ be an element in $A$. We say that $T$ is a $^*$-homomorphism at $z$ if $a b^*=z$ in $A$ implies $T (a b^*) =T (a) T (b)^*=T(z)$, and $ c^* d=z$ in $A$ gives $T (c^* d) =T (c)^* T (d)=T(z).$
\end{definition}

Clearly, every $^*$-homomorphism between C$^*$-algebras is a $^*$-homomorphism at every point of its domain.\smallskip

Let $A$ and $B$ be C$^*$-algebras, where $A$ is unital. In Theorem \ref{t star-homomorphism at the unit element} we prove that every linear map  $T: A\to B$ which is a $^*$-homomorphism at the unit of $A$ is a Jordan $^*$-homomorphism. If we additionally assume that $A$ is simple and infinite, then a linear map $T: A\to B$ is a $^*$-homomorphism if and only if $T$ is a $^*$-homomorphism at the unit of $A$ (see Theorem \ref{c *hom at 1-simple-infinite}). For a general unital C$^*$-algebra $A$ and a linear map $T:A\to B$, we establish, in Corollary \ref{c *hom at 0 and 1}, that the following conditions are equivalent:
\begin{enumerate}[$(a)$]
\item $T$ is a $^*$-homomorphism,
\item $T$ is a $^*$-homomorphism at $0$ and at $1$.
\end{enumerate} Actually if $p$ is a non-zero projection in $A$, and $T$ is a $^*$-homomorphism at $p$ and at $1-p$, then we prove that $T$ is a Jordan $^*$-homomorphism (see Corollary \ref{c star hom at two complementary projections}).\smallskip

In section \ref{sec:3} we initiate the study of those linear maps $T:A\to B$ that are $^*$-homomorphisms at a fixed unitary element in $A$. Under this hypothesis the conclusions are harder to establish. In Theorem \ref{*-hom at unitary} we prove that if a bounded linear map $T$ from a unital $C^*$-algebra $A$ into another C$^*$-algebra is a $^*$-homomorphism at a unitary $u$ in $A$, then $T(1)$ is a projection, $T(1)T(a)=T(a)T(1)$ for every $a\in A$, and $T(1)T$ is a Jordan homomorphism.  If we also assume that $T(1) T(x) =T(x)$ for every $x\in A$, then $T$ is a Jordan homomorphism. Besides, if $T$  is a $^*$-homomorphism at $0$ and at $u$ then $T(1)$ is a projection, $T(1)$ commutes with $T(A)$, and $T(1)T$ is a $^*$-homomorphism (see Corollary \ref{c *hom at 0 and u}).

\section{$^*$-homomorphism at the unit element}

Multiplicative maps at the identity are connected with Hua's theorem and some of its generalizations. It is well known that every unital Jordan homomorphism between Banach algebras strongly preserves invertibility, that is,
$T(a^{-1})=T(a)^{-1}$, for every invertible element $a\in A$.
Moreover, Hua's theorem shows that every unital
additive map between fields that strongly preserves invertibility is
either an isomorphism or an anti-isomorphism (\cite[Theorem 1.15]{Artin}).\smallskip

In \cite{Chebotar}, M. A. Chebotar, W.-F. Ke, P.-H. Lee, L.-S. Shiao improved Hua's theorem by relaxing the condition $T(a^{-1})=T(a)^{-1}$ to
\begin{equation}\label{Chebo} T(a)T(a^{-1})=T(b)T(b^{-1}),\end{equation} for all $a,b\in A^{-1}$. They prove that every bijective additive map $T:K\to K$ on a division ring $K$,
satisfying (\ref{Chebo}) is of the form $T=T(1)S$, where $S:K\to K$ is an automorphism or an anti-automorphism and $T(1)$ lies in the center of $K$.\smallskip

Later, Y.-F. Lin, T.-L. Wong extended this result to the ring $M_n(K)$ of $n\times n$ ($n\geq 2$) matrices over a division ring $K$ \cite{Lin}. From \cite[Theorem 1.3]{Lin}, if $T:M_n(K)\to M_n(K)$ is a bijective additive map satisfying (\ref{Chebo}), such that $T(1)^2\neq 0$,  then $T(1)$ is a central (invertible) element in $M_n(K)$ and $T(1)^{-1}T$ is an automorphism or an anti-automorphism. Additive maps between general unital Banach satisfying condition (\ref{Chebo}) have been studied by A. M\'arquez-Garc\'ia, A. Morales-Campoy and the first author of this note in \cite{BuMaMo15}.\smallskip

It is clear that every linear map $T:A\to B$ which is multiplicative at the identity verifies (\ref{Chebo}) and $T(1)$ is an idempotent. We also recall that an element $a$ in a Banach algebra $A$ is called Drazin invertible if there exists a (unique) element $b\in A$ such that $ab=ba,$ $bab=b$ and $a^kba=a^k$ for some positive integer $k$. If such $b$ exists, $b$ is called the Drazin inverse of $a$ and is usually denoted by $a^D$.  In particular every idempotent element $p\in A$ is Drazin invertible with $p^D=p$. Taking this into account,
the following result can be deduced directly from \cite[Proposition 2.5]{BuMaMo15}.

\begin{proposition}\label{mult-at-uno} Let $A$ and $B$ be unital Banach algebras, and let $T:A\to B$ be an additive map which is multiplicative at the unit of $A$. Then $T(1)$ commutes with $T(A)$ and $T(1)T$ is a Jordan homomorphism.$\hfill\Box$
\end{proposition}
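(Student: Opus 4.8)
The plan is to extract everything from two elementary consequences of multiplicativity at the unit and then to recognize $S:=T(1)T$ as a unital map that strongly preserves invertibility, to which a Hua-type argument applies. Write $p:=T(1)$. From $1\cdot 1=1$ the hypothesis gives $p=T(1)=T(1)T(1)=p^2$, so $p$ is idempotent, and for every invertible $a\in A$ the identities $aa^{-1}=1=a^{-1}a$ yield
\begin{equation}\label{eq proposal inverse}
T(a)T(a^{-1}) = p = T(a^{-1})T(a).
\end{equation}
This is exactly the observation recorded before the statement, namely that $T$ satisfies (\ref{Chebo}) with $T(1)$ idempotent (hence Drazin invertible with $p^{D}=p$), so one route is simply to verify these hypotheses and quote \cite[Proposition 2.5]{BuMaMo15}. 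I would instead argue directly.

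First I would prove the commutation statement with a short trick. Multiplying the left equality in \eqref{eq proposal inverse} on the left by $T(a^{-1})$ and using the right equality gives
\begin{equation*}
T(a^{-1})\, p = T(a^{-1})\big(T(a)T(a^{-1})\big) = \big(T(a^{-1})T(a)\big)T(a^{-1}) = p\, T(a^{-1}),
\end{equation*}
so $p$ commutes with $T(a^{-1})$ for every invertible $a$. Since $a\mapsto a^{-1}$ is a bijection of the invertible group, $p$ commutes with $T(b)$ for every invertible $b\in A$. Finally every $x\in A$ is a sum of two invertibles, because $x=(x-\lambda 1)+\lambda 1$ splits into two invertibles once $\lambda\neq 0$ is chosen outside the (bounded) spectrum of $x$; since $x\mapsto pT(x)-T(x)p$ is additive and vanishes on the invertibles, it vanishes on all of $A$. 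Hence $T(1)$ commutes with $T(A)$.

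With commutation in hand I would pass to the corner. Setting $S:=pT=T(1)T$, the relation $pT(x)=T(x)p$ gives $S(x)=pT(x)p$, so $S$ maps $A$ into the unital Banach algebra $pBp$ with unit $S(1)=p$, and \eqref{eq proposal inverse} together with $p^2=p$ yields $S(a)S(a^{-1})=p=S(a^{-1})S(a)$ for every invertible $a$; that is, $S$ is unital and strongly preserves invertibility, $S(a^{-1})=S(a)^{-1}$ in $pBp$. Now I would feed this into Hua's identity
\begin{equation*}
aba = a - \Big(a^{-1} + (b^{-1}-a)^{-1}\Big)^{-1},
\end{equation*}
valid whenever $a$, $b$ and $b^{-1}-a$ are invertible: since $S$ is additive, unital and inverts invertibles, applying $S$ to both sides and reading the same identity inside $pBp$ for $S(a),S(b)$ gives $S(aba)=S(a)S(b)S(a)$ on the admissible set of invertible pairs. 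Specializing $b=1$ produces $S(a^2)=S(a)^2$ for every invertible $a$.

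The last step, and the place I expect the only real friction, is upgrading $S(a^2)=S(a)^2$ from invertibles to all of $A$ while respecting that $T$ is merely additive. I would consider the quadratic defect $Q(x):=S(x^2)-S(x)^2$, which vanishes on invertibles, and exploit that for a fixed invertible $a$ and arbitrary $x$ the element $a+tx$ is invertible for all rational $t$ near $0$; expanding and using rational homogeneity of the additive map $S$ gives
\begin{equation*}
0 = Q(a+tx) = Q(a) + t\big(S(ax+xa)-S(a)S(x)-S(x)S(a)\big) + t^2 Q(x),
\end{equation*}
a polynomial of degree at most two in $t$ vanishing at infinitely many rationals, whence $Q(x)=0$ and $S(ax+xa)=S(a)S(x)+S(x)S(a)$. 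Thus $S(x^2)=S(x)^2$ for all $x$, i.e. $T(1)T=S$ is a Jordan homomorphism. The care needed is only bookkeeping: the invertibility side-conditions in Hua's identity restrict attention to a rich enough set of pairs, and scalar multiples must be taken rational because additivity, not linearity, is assumed.
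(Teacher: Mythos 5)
Your argument is correct in substance, but it takes a genuinely different route from the paper: for this proposition the paper gives no direct proof at all. It observes that a map which is multiplicative at $1$ satisfies (\ref{Chebo}) with $T(1)$ idempotent, hence Drazin invertible with $T(1)^{D}=T(1)$, and then deduces the statement from \cite[Proposition 2.5]{BuMaMo15} --- exactly the citation route you mention and set aside in your first paragraph. What your self-contained proof buys is independence from that external reference, at the cost of about a page of work: the commutation trick $T(a^{-1})\,p=T(a^{-1})T(a)T(a^{-1})=p\,T(a^{-1})$ together with writing every element as a sum of two invertibles, the passage to the corner $pBp$ where $S=T(1)T$ is unital and strongly preserves invertibility, Hua's identity to get $S(aba)=S(a)S(b)S(a)$, and the rational-polynomial argument to propagate $S(x^2)=S(x)^2$ from invertibles to all of $A$; it also honestly tracks the additive-only hypothesis through $\mathbb{Q}$-homogeneity, which a casual ``linearize'' would not. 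One step needs patching: Hua's identity with $b=1$ also requires $1-a$ to be invertible, so ``specializing $b=1$ gives $S(a^2)=S(a)^2$ for every invertible $a$'' is not literally available when $1\in\sigma(a)$. The repair is the scaling device you already invoke elsewhere: given invertible $a$, choose a nonzero rational $t$ with $|t|\,\|a\|<1$, so that both $ta$ and $1-ta$ are invertible; then $Q(ta)=0$, while additivity and rational homogeneity give $Q(ta)=t^{2}Q(a)$, whence $Q(a)=0$ for \emph{all} invertible $a$. With that in hand, your degree-two polynomial argument (three distinct rational values of $t$ kill all three coefficients) closes the proof as written, and polarization turns square-preservation into preservation of the Jordan product.
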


The proof of the next lemma is left to the reader.

\begin{lemma}\label{l symmetric starhom at z} Let $T : A\to B$ be a linear operator between C$^*$-algebras. Suppose that $T$ is symmetric {\rm(}i.e. $T(a^*) = T(a)^*$, for all $a\in A${\rm)}. Then $T$ is multiplicative at a point $z\in A$ if and only if $T$ is a $^*$-homomorphism at $z$. $\hfill\Box$
\end{lemma}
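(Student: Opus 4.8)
The plan is to exploit the symmetry hypothesis $T(a^*)=T(a)^*$ as a dictionary that translates statements about ordinary products $ab$ into statements about the products $ab^*$ and $c^*d$ appearing in Definition \ref{def *homo at a point}, and vice versa. No genuine analytic input is needed; everything reduces to a substitution of variables followed by an application of the symmetry identity, so I do not expect any real obstacle. The only point requiring a moment's care is that the notion of $^*$-homomorphism at $z$ carries \emph{two} defining conditions (one for factorizations $ab^*=z$ and one for factorizations $c^*d=z$), and I must make sure both are produced in the forward direction.

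First I would prove that multiplicativity at $z$ implies being a $^*$-homomorphism at $z$. Suppose $T$ is multiplicative at $z$ and take any $a,b\in A$ with $ab^*=z$. Writing $b'=b^*$, I have $ab'=z$, so multiplicativity gives $T(ab')=T(a)T(b')$, that is $T(ab^*)=T(a)T(b^*)$. Invoking symmetry in the form $T(b^*)=T(b)^*$ yields $T(ab^*)=T(a)T(b)^*$, and since $ab^*=z$ this equals $T(z)$; this is exactly the first defining condition. For the second condition I would argue symmetrically: given $c^*d=z$, set $c'=c^*$ so that $c'd=z$, apply multiplicativity to obtain $T(c^*d)=T(c^*)T(d)$, and rewrite $T(c^*)=T(c)^*$ to conclude $T(c^*d)=T(c)^*T(d)=T(z)$.

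For the reverse implication I would assume $T$ is a $^*$-homomorphism at $z$ and take arbitrary $x,y\in A$ with $xy=z$. The trick is to regard $y$ as the adjoint of $y^*$, so that $x(y^*)^*=xy=z$ is a factorization of the type covered by the first defining condition. Applying that condition with $a=x$ and $b=y^*$ gives $T(z)=T(x)T(y^*)^*$. Since symmetry yields $T(y^*)=T(y)^*$, and hence $T(y^*)^*=T(y)$, I obtain $T(xy)=T(x)T(y)$, which is precisely multiplicativity at $z$. Thus both implications follow purely formally from the symmetry of $T$, and this is presumably why the statement is left as an exercise.
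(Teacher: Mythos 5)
Your proof is correct: both directions are handled by the right substitutions ($b\mapsto b^*$, $c\mapsto c^*$ in the forward direction, $y\mapsto (y^*)^*$ in the reverse), and the symmetry hypothesis $T(a^*)=T(a)^*$ is invoked exactly where needed. The paper leaves this proof to the reader, and your routine substitution argument is precisely the intended verification.
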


\begin{lemma}\label{l 1 star-homomorphism at the unit element a} Let $A$ and $B$ be C$^*$-algebras, where $A$ is unital. Suppose $T: A\to B$ is a linear map satisfying that $a b^*=1$ in $A$ implies $T(a) T(b)^* = T(1)$. Then $T(1)$ is a projection. Consequently, the same conclusion holds when $T$ is a $^*$-homomorphism at the unit of $A$.
\end{lemma}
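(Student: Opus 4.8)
The plan is to extract everything from a single, essentially trivial, instance of the hypothesis. Since $1\cdot 1^{*}=1$ holds in the unital algebra $A$, the pair $(a,b)=(1,1)$ belongs to the relevant set, so the assumption forces $T(1)T(1)^{*}=T(1)$. Writing $p:=T(1)$, this reads $pp^{*}=p$, and I claim that this one identity already delivers both defining properties of a projection, so that no further pairs need to be examined.

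First I would check self-adjointness. For any element of a C$^{*}$-algebra the product $pp^{*}$ is self-adjoint, since $(pp^{*})^{*}=pp^{*}$. Because $pp^{*}=p$, the element $p$ coincides with a self-adjoint element and is therefore self-adjoint: $p=p^{*}$. Feeding this back into $pp^{*}=p$ turns it into $p^{2}=p$, so $p$ is idempotent as well. Being a self-adjoint idempotent, $p=T(1)$ is a projection, which is exactly the assertion.

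For the final sentence I would simply observe that if $T$ is a $^{*}$-homomorphism at the unit of $A$, then by Definition \ref{def *homo at a point} the implication ``$ab^{*}=1\Rightarrow T(a)T(b)^{*}=T(1)$'' is among its defining conditions; hence $T$ satisfies the hypothesis of the lemma and the same conclusion applies.

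The honest remark about the main obstacle is that there is essentially none: the only thing to notice is that the single pair $(1,1)$ already suffices, so there is no need to exploit the full family of pairs $(a,b)$ with $ab^{*}=1$, nor to appeal to Proposition \ref{mult-at-uno} or Lemma \ref{l symmetric starhom at z}. The one genuinely useful structural fact is the automatic self-adjointness of $pp^{*}$, which converts the single relation $pp^{*}=p$ into both $p=p^{*}$ and $p^{2}=p$ at once.
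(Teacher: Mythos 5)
Your proof is correct and is essentially the paper's own argument: both reduce to the single instance $1\cdot 1^{*}=1$, giving $T(1)=T(1)T(1)^{*}$, and then note that this element is automatically self-adjoint (the paper says positive), whence $T(1)=T(1)^{*}$ and $T(1)=T(1)^{2}$. No meaningful difference in approach.
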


\begin{proof} By assumption $T (1 )= T(1 1^*)= T(1) T(1)^*,$ and hence $T(1)$ is a positive element in $B$. In particular, $T(1) = T(1)^*$ and hence $T(1)^* = T(1) = T(1)^2$, which proves the desired statement.\end{proof}

There is an undoubted advantage in dealing with the geometric properties of C$^*$-algebras. Our next result is a direct consequence of the Russo-Dye theorem \cite{RuDye}, a milestone achievement in the theory of C$^*$-algebras.

\begin{proposition}\label{p automatic continuity of +hom at unit} Let $A$ and $B$ be C$^*$-algebras, where $A$ is unital. Suppose $T: A\to B$ is a linear map
satisfying that $a b^*=1$ in $A$ implies $T(a) T(b)^* = T(1)$. Then $T$ is continuous and $ T(1) T(a) = T(a)$, for every $a\in A$. Furthermore, if $T$ is a $^*$-homomorphism at the unit then $T$ is continuous and $T(a) T(1) = T(1) T(a) = T(a)$, for every $a\in A$.
\end{proposition}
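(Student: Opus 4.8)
The plan is to extract, from the single algebraic hypothesis at the unit, uniform information on the unitary elements of $A$, and then to transfer it to all of $A$ by means of the Russo--Dye theorem. First I would record that $p:=T(1)$ is a projection, which is exactly Lemma \ref{l 1 star-homomorphism at the unit element a}. The crucial observation is that every unitary $u\in A$ satisfies $u u^*=1$, so the hypothesis that $a b^*=1$ implies $T(a)T(b)^*=T(1)$, applied with $a=b=u$, yields $T(u)T(u)^*=T(1)=p$ for \emph{every} unitary $u$. Taking norms and using the C$^*$-identity, $\|T(u)\|^2=\|T(u)T(u)^*\|=\|p\|\le 1$, whence $\|T(u)\|\le 1$ for all unitaries $u$.

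For continuity I would invoke the Russo--Dye theorem \cite{RuDye} in its sharp form: every $a\in A$ with $\|a\|<1$ is an exact finite convex combination (indeed a mean) of unitaries, say $a=\frac1n\sum_{j=1}^n u_j$. Linearity then gives $\|T(a)\|\le \frac1n\sum_{j=1}^n\|T(u_j)\|\le 1$, and a scaling argument upgrades this to $\|T(a)\|\le\|a\|$ for all $a$, so $T$ is bounded with $\|T\|\le 1$. I expect this to be the main (if short) obstacle: since continuity is precisely what is being established, one cannot first approximate $a$ by convex combinations of unitaries and then pass to the limit, so it is essential to use the version of Russo--Dye that represents interior points as genuine finite convex combinations rather than as mere norm-limits.

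Finally I would derive the multiplication identities from the projection structure. Writing $a=T(u)$, the relation $a a^*=p$ together with $p=p^*=p^2$ gives, after expanding, $(a-pa)(a-pa)^*=aa^*-aa^*p-paa^*+paa^*p=p-p-p+p=0$; hence $a=pa$, that is, $T(1)T(u)=T(u)$ for every unitary $u$. As the unitaries span $A$ linearly, linearity yields $T(1)T(a)=T(a)$ for all $a\in A$, completing the first assertion. For the second, the additional hypothesis of being a $^*$-homomorphism at the unit provides, from $u^*u=1$, the companion relation $T(u)^*T(u)=p$; the symmetric computation $(a-ap)^*(a-ap)=a^*a-a^*ap-pa^*a+pa^*ap=0$ then gives $T(u)T(1)=T(u)$, which extends to $T(a)T(1)=T(a)$ by linearity. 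Combining the two identities proves $T(a)T(1)=T(1)T(a)=T(a)$ for every $a\in A$.
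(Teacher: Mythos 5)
Your proof is correct and takes essentially the same route as the paper's: $T(1)$ is a projection by Lemma \ref{l 1 star-homomorphism at the unit element a}, the hypothesis applied to unitaries gives $T(u)T(u)^*=T(1)$ (and $T(u)^*T(u)=T(1)$ in the $^*$-homomorphism case), and the Russo--Dye theorem yields both the continuity and, after passing from unitaries to general elements, the identities $T(1)T(a)=T(a)=T(a)T(1)$. The only inessential differences are that you invoke the sharp form of Russo--Dye (every element of the open unit ball is a finite convex combination of unitaries) and extend the identities via the linear span of the unitaries, whereas the paper cites \cite[Corollary 1]{RuDye} for boundedness and uses the fact that every element of norm $<\frac12$ is a convex combination of four unitaries; both variants are valid, and your symmetric computation for $T(a)T(1)=T(a)$ is exactly what the paper leaves implicit in ``the final statement follows by similar arguments.''
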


\begin{proof} Let $u$ be a unitary element in $A$. In this case, $$T(1)  = T(u) T(u)^*.$$ We observe that, by Lemma \ref{l 1 star-homomorphism at the unit element a}, $T(1)$ is a projection in $B$. Thus, $T(u)$ is a partial isometry, and $T(u)=T(1) T(u),$ for every unitary $u$ in $A$. The Russo-Dye Theorem implies that $T$ is continuous (see \cite[Corollary 1]{RuDye}). Since every element $a\in A$ with $\|a\|<\frac12$ is a convex combination of four unitaries (compare \cite[page 414]{RuDye}), we also deduce that $T(1) T(a) = T(a)$, for every $a$ in $A$. The final statement follows by similar arguments.
\end{proof}

\begin{theorem}\label{t star-homomorphism at the unit element} Let $A$ and $B$ be C$^*$-algebras, where $A$ is unital. Let $T: A\to B$ be a linear map which is a $^*$-homomorphism at the unit of $A$. Then $T$ is a Jordan $^*$-homomorphism.
\end{theorem}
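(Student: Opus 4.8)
The plan is to reduce the statement to the results already quoted by establishing the single missing ingredient, namely that $T$ is \emph{symmetric}, $T(x^*)=T(x)^*$. First I would record the consequences of Proposition~\ref{p automatic continuity of +hom at unit} and Lemma~\ref{l 1 star-homomorphism at the unit element a}: the map $T$ is continuous, $p:=T(1)$ is a projection, and $T(a)p=pT(a)=T(a)$ for every $a\in A$. In particular $T(A)\subseteq pBp$, so we may regard $T$ as a unital linear map into the unital C$^*$-algebra $pBp$, whose unit is $p$. For each unitary $u\in A$, applying the hypothesis to $a=b=u$ (so that $uu^*=1$) yields $T(u)T(u)^*=p$, while the second condition of Definition~\ref{def *homo at a point} applied to $c=d=u$ gives $T(u)^*T(u)=p$; thus $T$ carries unitaries of $A$ to unitaries of $pBp$.

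The heart of the argument is to prove that $T(h)=T(h)^*$ for every self-adjoint $h\in A$, after which symmetry follows by writing an arbitrary element as $h_1+ih_2$ with $h_1,h_2$ self-adjoint and invoking linearity. To this end I would feed the one-parameter unitary group $u(t)=\exp(ith)$ into the relation $T(u(t))T(u(t))^*=p$, which holds for every $t\in\mathbb{R}$. Since $T$ is continuous and linear, it may be applied term by term to the norm-convergent series $u(t)=\sum_{n\ge0}(it)^n h^n/n!$, giving $T(u(t))=p+it\,T(h)+O(t^2)$ and hence $T(u(t))^*=p-it\,T(h)^*+O(t^2)$. Multiplying out and using $T(h)p=T(h)$ together with $pT(h)^*=T(h)^*$, the coefficient of $t$ in $T(u(t))T(u(t))^*-p=0$ is $i\bigl(T(h)-T(h)^*\bigr)$, forcing $T(h)=T(h)^*$.

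Once symmetry is in hand, the conclusion is immediate from the quoted results. By Lemma~\ref{l symmetric starhom at z}, a symmetric map that is a $^*$-homomorphism at $1$ is multiplicative at $1$, so Proposition~\ref{mult-at-uno} (applied with codomain the unital algebra $pBp$) shows that $T(1)T$ is a Jordan homomorphism. Since $T(1)T(a)=T(a)$ for all $a$, we have $T(1)T=T$, whence $T$ itself is a Jordan homomorphism; combined with $T(x^*)=T(x)^*$ this is precisely the assertion that $T$ is a Jordan $^*$-homomorphism. I expect the symmetry step to be the main obstacle: one must justify that continuity of $T$ legitimizes both the term-by-term expansion of $T(u(t))$ and the extraction of the first-order coefficient, and one must check that testing on the special unitaries $\exp(ith)$ (rather than on all of $A$) genuinely suffices to recover symmetry on every self-adjoint element.
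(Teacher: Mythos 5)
Your proof is correct, and it is worth comparing the two routes. The first half coincides with the paper's argument: the paper likewise feeds the unitaries $e^{ita}$ (for $a=a^*$) into the identity $T(e^{ita})T(e^{ita})^*=T(1)$ and differentiates at $t=0$, which is precisely your extraction of the first-order coefficient; combined with $T(a)T(1)=T(1)T(a)=T(a)$ and the fact that $T(1)$ is a projection, this yields the symmetry $T(h)=T(h)^*$ that you isolate explicitly (in the paper it appears in the equivalent form $T(a)T(1)^*=T(1)T(a)^*$). Where you genuinely diverge is the conclusion. The paper takes a \emph{second} derivative at $t=0$ of the same identity, obtaining $2\,T(a)T(a)^*=T(a^2)T(1)^*+T(1)T(a^2)^*$, and hence $T(a^2)=T(a)T(a)^*=T(a)^*T(a)=T(a)^2$ for every self-adjoint $a$, which is the classical criterion for being a Jordan $^*$-homomorphism. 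You instead avoid the second derivative altogether by routing the conclusion through Lemma \ref{l symmetric starhom at z} (symmetry upgrades ``$^*$-homomorphism at $1$'' to ``multiplicative at $1$'') and Proposition \ref{mult-at-uno}, applied with codomain the unital algebra $pBp$, so that $T(1)T=T$ is a Jordan homomorphism; together with symmetry this gives the statement. Both routes are valid. Yours is more modular and actually puts to work the two preliminary results that the paper states but never invokes in this proof; the price is that the Jordan-homomorphism part now rests on the external result \cite[Proposition 2.5]{BuMaMo15} rather than on a self-contained C$^*$-computation. Your attention to the two delicate points --- justifying the norm expansion $T(u(t))=p+it\,T(h)+O(t^2)$ via continuity, and checking $T(h)p=T(h)$, $pT(h)^*=T(h)^*$ before cancelling --- is exactly what makes the coefficient extraction rigorous, and matches what the paper's differentiation implicitly requires.
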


\begin{proof} Proposition \ref{p automatic continuity of +hom at unit} proves that $T$ is continuous.  Let us take $a=a^*$ in $A$.  Since $e^{ita}$ is a unitary element in $A$ for every $t\in \mathbb{R}$, we deduce that $$ T(1) = T(e^{ita} (e^{ita})^*) = T(e^{ita}) T(e^{ita})^*,$$ for every real $t$. Taking the first derivative in $t$ we get \begin{equation}\label{eq 20 09} 0= i T(a e^{ita}) T(e^{ita})^* -i T(e^{ita}) T(a e^{ita})^*,
 \end{equation}and $$ 0= -i T(a e^{ita})^* T(e^{ita}) +i T(e^{ita})^* T(a e^{ita}),$$ for every real $t$. The case $t=0$ shows that $$  T(a ) T(1)^* = T(1) T(a)^*, \hbox{ for every } a=a^*\in A.$$

Taking a new derivative at $t=0$ in \eqref{eq 20 09} we prove that $$ 0= - T(a^2 ) T(1)^* + T(a ) T(a )^*+ T(a ) T(a )^* - T(1) T(a^2)^*,
$$ for every $a= a^*$. Combining the above identities with the conclusions in Lemma \ref{l 1 star-homomorphism at the unit element a} and Proposition \ref{p automatic continuity of +hom at unit} we have $$T(a^2) = T(a) T(a)^* = T(a)^* T(a) =T(a)^2,$$ for every $a=a^*$.  It is well known that in these circumstances $T$ is a Jordan $^*$-homomorphism.
\end{proof}

\begin{remark}Let $A$ and $B$ be C$^*$-algebras. Assume that $A$ is unital. Let  $S:A\to B$ be a Jordan $^*$-homomorphism. Clearly, $S(1)$ is a projection in $B$ with $S(1) \circ S(a) = S(a\circ 1) = S(a)$ for every $a\in A$. So, replacing $B$ with the C$^*$-subalgebra $C$ of $B$ generated by $S(A)$, we can assume that $B$ is unital and $S(1) =1$.\smallskip

It is known that invertibility can be reformulated in terms of Jordan products, that is, an element $a$ in $A$ is invertible {\rm(}with inverse $a^{-1}${\rm)} if and only if $a\circ a^{-1} =1$ and $a^2 \circ a^{-1} = a$ {\rm(}see \cite[page 51]{Jac}{\rm)}.\smallskip

Take $a,b\in A$ with $ab^*=b^*a=1$, clearly, $a$ is invertible with inverse $a^{-1} = b^*$. Since $S(a)\circ S(b)^* = S(1)$ and $S(a)^2 \circ S(b)^* = S(a)$, we deduce that $S(a)$ in invertible in $C$ with $S(a)^{-1} = S(b)^* = S(b^*)$.  Therefore, $S$ satisfies the following property:
\begin{equation}\label{e more than *hom at 1} ab^*=b^*a=1 \Rightarrow  S(a)S(b)^*=S(b)^*S(a)=S(1).
	\end{equation}

The arguments in the proofs of Lemma \ref{l 1 star-homomorphism at the unit element a}, Proposition \ref{p automatic continuity of +hom at unit} and Theorem \ref{t star-homomorphism at the unit element} are also valid to prove that every linear mapping $S:A\to B$ satisfying \eqref{e more than *hom at 1} is a Jordan $^*$-homomorphism. Therefore, the property in \eqref{e more than *hom at 1} characterizes those linear maps from $A$ into $B$ that are Jordan $^*$-homomorphisms.
\end{remark}

\begin{example}\label{example Pop}{\rm(}\cite[\S 2]{Pop}, \cite[Example 3.13]{Peralta}{\rm)} Let us consider the mapping $T : M_2 (\mathbb{C}) \to M_4 (\mathbb{C})$, defined by $$ T\left( \left(
 \begin{array}{cc}
 a & b \\
 c & d \\
 \end{array}
 \right)
 \right)  = \left(
 \begin{array}{cccc}
 a & 0 & b & 0 \\
 0 & a & 0 & c \\
 c & 0 & d & 0 \\
 0 & b & 0 & d \\
 \end{array}
 \right).$$
It is easy to check that the $T$ is a unital Jordan $^*$-homomorphism. Furthermore, since every one-sided invertible element in  $M_2 (\mathbb{C})$ is invertible, $T$ is a $^*$-homomorphism at the unit element of $M_2 (\mathbb{C})$.\smallskip

Furthermore, the element $\displaystyle a= \left(
\begin{array}{cc}
1 & -1 \\
1 & -1 \\
\end{array}
\right)$ satisfies $a (a^*)^*=(a^*)^* a=0$ and $T(a) T(a) = T(a) T(a^*)^*\neq 0,$  which shows that $T$  is not a $^*$-homomorphism at zero, and hence $T$ is not multiplicative.
\end{example}

In the assumptions of Theorem \ref{t star-homomorphism at the unit element}, obviously, every $^*$-homomorphism $\Phi : A\to B$ is a $^*$-homomorphism at the unit of $A$. Example \ref{example Pop} above shows that the reciprocal is, in general, hopeless. We shall seek for additional hypothesis to conclude that a $^*$-homomorphism at the unit of $A$ is a $^*$-homomorphism.\smallskip

We recall that a unital C$^*$-algebra $A$ is said to be \emph{finite} if its unit is a finite projection. Otherwise, $A$ is called \emph{infinite}. Equivalently, a unital C$^*$-algebra  $A$, is infinite if it contains an infinite projection $p$, that is, $p$ is  Murray-von Neumann equivalent to its subprojections.

\begin{theorem}\label{c *hom at 1-simple-infinite} Let $A$ and $B$ be C$^*$-algebras, where $A$ is simple and infinite. Let $T: A\to B$ be a linear map. Then $T$ is a $^*$-homomorphism if and only if $T$  is a $^*$-homomorphism at $1$.
\end{theorem}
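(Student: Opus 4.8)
The plan is as follows. One implication is immediate: every $^*$-homomorphism is a $^*$-homomorphism at every point, in particular at $1$. For the converse, suppose $T$ is a $^*$-homomorphism at $1$. By Theorem~\ref{t star-homomorphism at the unit element}, $T$ is a (continuous) Jordan $^*$-homomorphism, and by Proposition~\ref{p automatic continuity of +hom at unit} the element $T(1)$ is a projection satisfying $T(1)T(a)=T(a)T(1)=T(a)$ for all $a\in A$. Passing to the bidual, let $M$ denote the von Neumann subalgebra of $B^{**}$ generated by $T(A)$; then $T(1)$ is the unit of $M$, and $T:A\to M$ is a Jordan $^*$-homomorphism into a von Neumann algebra.

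Now I would invoke the classical decomposition of a Jordan $^*$-homomorphism into a homomorphism and an anti-homomorphism (Jacobson--Rickart, St{\o}rmer): there is a central projection $e\in M$ such that $\phi:=e\,T$ is a $^*$-homomorphism and $\psi:=(1-e)\,T$ is a $^*$-anti-homomorphism, with $T=\phi+\psi$. Since $e$ is central, the ranges of $\phi$ and $\psi$ are orthogonal, so for every $x\in A$ one has $T(x)^*T(x)=\phi(x)^*\phi(x)+\psi(x)^*\psi(x)$. The goal is to show $\psi=0$, which is precisely the assertion that $T$ is multiplicative.

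This is where simplicity and infiniteness enter, through the choice of a distinguished element. Because $A$ is infinite and unital, its unit is an infinite projection, hence Murray--von Neumann equivalent to a proper subprojection; thus there is an isometry $s\in A$ with $s^*s=1$ and $ss^*=p\neq 1$. Applying the hypothesis that $T$ is a $^*$-homomorphism at $1$ to the relation $s^*s=1$ yields $T(s)^*T(s)=T(1)=1_M$. Expanding via the orthogonal decomposition, and using that $\phi$ is multiplicative while $\psi$ is anti-multiplicative, I compute $\phi(s)^*\phi(s)=\phi(s^*s)=\phi(1)=e$ and $\psi(s)^*\psi(s)=\psi(s^*)\psi(s)=\psi(ss^*)=\psi(p)$, whence $1_M=e+\psi(p)$. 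Since also $1_M=e+(1-e)$ and $\psi(1)=(1-e)T(1)=1-e$, this forces $\psi(1-p)=\psi(1)-\psi(p)=0$. As $1-p$ is a nonzero projection and the kernel of the continuous anti-homomorphism $\psi$ is a closed two-sided ideal of the simple algebra $A$, I conclude $\ker\psi=A$, that is $\psi=0$ and $e=1_M$. Therefore $T=\phi$ is a $^*$-homomorphism.

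The main obstacle is the second step: promoting the Jordan $^*$-homomorphism delivered by Theorem~\ref{t star-homomorphism at the unit element} to a genuine $^*$-homomorphism. Everything rests on extracting from the word ``Jordan'' the splitting into a homomorphism and an anti-homomorphism, and then exploiting the asymmetry encoded in a proper isometry---the relation $s^*s=1\neq ss^*$ behaves oppositely under $\phi$ and under $\psi$---together with the $c^*d=1$ clause of Definition~\ref{def *homo at a point}, so as to annihilate the anti-homomorphism part. Were $A$ finite this asymmetry would be unavailable, which explains why the hypothesis of infiniteness is exactly what is needed.
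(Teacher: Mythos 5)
Your proposal is correct and follows essentially the same route as the paper: first Theorem \ref{t star-homomorphism at the unit element}, then St{\o}rmer's decomposition of the resulting Jordan $^*$-homomorphism via a central projection in the von Neumann algebra generated by $T(A)$ inside $B^{**}$, and finally the use of simplicity together with infiniteness (a non-unitary isometry, equivalently a one-sided invertible non-invertible element) to annihilate the $^*$-anti-homomorphism summand. The only cosmetic difference is the finish: the paper assumes the anti-part is injective and derives the contradiction $1=ab^*$ from $b^*a=1$, $ab^*\neq 1$, whereas you exhibit $1-ss^*$ directly as a nonzero element of $\ker\psi$; this is the same computation, phrased without the case analysis on the kernel.
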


\begin{proof} Assume that $T$ is a $^*$-homomorphism at $1$. By Theorem \ref{t star-homomorphism at the unit element}, $T$ is a Jordan $^*$-homomorphism, $T(1) =p$ is a projection in $B$ and $T(A)\subseteq p B p$. Let $W$ denote the von Neumann subalgebra of $B^{**}$ generated by $T(A)$. Clearly, $W \subseteq p B^{**} p$. Applying \cite[Theorem 3.3]{Stor65}, there exists a central projection $q\in W$ such that $q T : A\to W$, $a\mapsto q T(a)$ is a $^*$-homomorphism and  $(1-q) T : A\to W$, $a\mapsto (1-q) T(a)$ is a $^*$-anti-homomorphism.\smallskip

Since $q$ is central in $W$ and $T$ is a $^*$-homomorphism at $1$, it follows that $q T$ and $(1-q) T$ are $^*$-homomorphisms at $1$.\smallskip

We focus on the map $T_1 = (1-q) T: A\to W$ which is a $^*$-anti-homomorphism. Its kernel, $K= \ker(T_1)$, is a norm-closed $^*$-ideal of $A$. Since $A$ is simple, either $K=\{0\}$ or $K=A$. Suppose that $K=\{0\}$. In this case,  $T_1 : A\to W$ is an injective $^*$-anti-homomorphism, which is a $^*$-homomorphism at $1$.\smallskip

Since $A$ is simple and infinite, we deduce from \cite[Proposition V.2.3.1$(v)$]{Black2006} that $A$ contains a left (or right) invertible element which is not invertible. Pick $a,b\in A$ such that $b^*a=1$ and $ab^*\neq 1$. Applying that $T_1$ is a $^*$-homomorphism at $1$, we get
$$ T_1(1)=T_1(b^*a)=T_1(b)^*T_1(a).$$ Taking into account that $T_1$ is a  $^*$-anti-homomorphism, it follows that
$$T_1(1)=T_1(b)^*T_1(a)=T_1(ab^*),$$ which, by the injectivity of $T_1$, implies that $1=ab^*$, and we get a contradiction. Therefore, $K=A$ and hence $T=qT$ is a $^*$-homomorphism.
\end{proof}

The previous theorem gives conditions on a unital C$^*$-algebra $A$ to guarantee that every $^*$-homomorphism at the unit element is a $^*$-homomorphism. In the next results we explore how to strength the assumptions on the mapping to obtain the same conclusion.

\begin{theorem}\label{cstara-0-1} Let $T: A\to B$ be a linear map between C$^*$-algebras, where $A$ is unital. Suppose that $T$ satisfies the following conditions:
\begin{enumerate}[$(a)$]
\item $ab^*=1\Rightarrow T(a)T(b)^*=T(1)$,
\item $c^*d=0\Rightarrow T(c)^*T(d)=0$.
\end{enumerate} Then $T$ is a $^*$-homomorphism.
\end{theorem}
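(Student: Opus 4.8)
The plan is to use hypothesis $(a)$ only to pin down the behaviour at the unit and the continuity of $T$, and then to convert hypothesis $(b)$ into a zero-product preserving property of an auxiliary bilinear map; the zero-product determinacy of C$^*$-algebras will then deliver symmetry and multiplicativity simultaneously.

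First I would record what $(a)$ provides. Condition $(a)$ is exactly the hypothesis of Lemma \ref{l 1 star-homomorphism at the unit element a} and of Proposition \ref{p automatic continuity of +hom at unit}, so $p:=T(1)$ is a projection in $B$, the map $T$ is continuous, and $p\,T(a)=T(a)$ for every $a\in A$ (equivalently $T(a)^*p=T(a)^*$). These facts, together with $p=p^*$, are all that will be drawn from $(a)$.

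Next I would reformulate $(b)$. Consider the map $\Phi:A\times A\to B$ given by $\Phi(c,d)=T(c^*)^*\,T(d)$. Since the involution is conjugate-linear and isometric and $T$ is linear and bounded, the assignment $c\mapsto T(c^*)^*$ is linear and bounded, so $\Phi$ is a bounded bilinear map. Applying $(b)$ to the pair $(c^*,d)$ shows that $cd=0$ forces $T(c^*)^*T(d)=0$, i.e. $\Phi$ vanishes on every pair of elements with zero product. Here is the crucial step: a C$^*$-algebra is \emph{zero-product determined}, meaning that a bounded bilinear map which annihilates all zero-product pairs factors through the multiplication (this is the circle of ideas around \cite{AlBreExtrVill}). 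Applied to $\Phi$ on the unital algebra $A$, where $A^2=A$, this yields a bounded linear map $\xi:A\to B$ with $\Phi(c,d)=\xi(cd)$ for all $c,d\in A$. Setting $c=1$ gives $\xi(d)=T(1)^*T(d)=p\,T(d)=T(d)$, so $\xi=T$. Setting $d=1$ gives $T(c)=\xi(c)=T(c^*)^*\,T(1)=T(c^*)^*p=(p\,T(c^*))^*=T(c^*)^*$, which is precisely the symmetry $T(c^*)=T(c)^*$. Substituting both identities into $\Phi(c,d)=\xi(cd)$ produces $T(cd)=T(c^*)^*T(d)=T(c)T(d)$ for all $c,d$, so $T$ is multiplicative, and being symmetric it is a $^*$-homomorphism.

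I expect the factorisation $\Phi(c,d)=\xi(cd)$ to be the main obstacle: everything preceding it is bookkeeping with $(a)$, and everything following it is a two-line specialisation, so the genuine content is promoting the orthogonality condition $(b)$ from disjoint pairs to an identity valid for all pairs, which is exactly where the C$^*$-structure must enter through the zero-product/automatic-continuity machinery of \cite{AlBreExtrVill}. Should one wish to avoid the bilinear factorisation, an alternative is to establish symmetry first by hand (for each projection $q$, applying $(b)$ to the pairs $(q,1-q)$ and $(1-q,q)$ forces $T(q)^*T(q)=T(q)=T(q)^*$, so $T(q)$ is a projection) and then, with symmetry in hand, to rewrite $(b)$ as ordinary multiplicativity at $0$ and invoke \cite[Theorem 4.1]{AlBreExtrVill} to get $T(1)T(xy)=T(x)T(y)$, i.e. $T(xy)=T(x)T(y)$; in that route the difficulty simply migrates to extending symmetry from projections to arbitrary self-adjoint elements, which is why the bilinear factorisation is the cleaner path.
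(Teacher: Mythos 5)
Your proposal is correct and takes essentially the same route as the paper's proof: both extract from $(a)$ that $T(1)$ is a projection with $T(1)T(a)=T(a)$ and that $T$ is bounded, and both then apply the Alaminos--Bre\v{s}ar--Extremera--Villena zero-product theorem \cite[Theorem 2.11]{AlBreExtrVillb} to the bounded bilinear map $\phi(c,d)=T(c^*)^*T(d)$ (your factorization $\phi(c,d)=\xi(cd)$ is, for unital $A$, just the specialization $a=1$ of the identity $\phi(a,bc)=\phi(ab,c)$ that the paper quotes). The only divergence is a sub-step: you obtain symmetry by evaluating at $d=1$, whereas the paper takes $b=c=b^*$ to get $T(b^2)=T(b)^*T(b)\geq 0$ and deduces symmetry from positivity; both are valid, and your variant is slightly more direct.
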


\begin{proof}
We explore first the consequences derived from condition $(a)$. By Lemma \ref{l 1 star-homomorphism at the unit element a} and Proposition \ref{p automatic continuity of +hom at unit}, we conclude that $T(1)$ is a projection, $T$ is bounded, and $T(a)=T(1)T(a)$, for every $a\in A$. Moreover,
as in the proof of Theorem \ref{t star-homomorphism at the unit element}, given  $a=a^*$ in $A$, since $e^{ita}$ is a unitary element in $A$ for every $t\in \mathbb{R}$, we get $$ T(1) = T(e^{ita} (e^{ita})^*) = T(e^{ita}) T(e^{ita})^*,$$ for every real $t$. Taking the first derivative at $t$ we get $$0= i T(a e^{ita}) T(e^{ita})^* -i T(e^{ita}) T(a e^{ita})^*,$$ for every real $t$. In particular, for $t=0$ we get $T(a)T(1)^*=T(1)T(a)^*$, for every $a=a^*$ in $A$. Therefore $T(x)T(1)=T(1)T(x^*)^*,$ for every $x\in A$.\smallskip

Let us focus on condition $(b)$. Let us define a mapping $\phi:A\times A\to B$ given by $\phi(a,b)=T(a^*)^*T(b),$ for all $a,b \in A$. Clearly, $\phi$ is a bounded bilinear map, and, by condition $(b)$, it satisfies
$$ab=0\Rightarrow \phi(a,b)=0.$$
By \cite[Theorem 2.11]{AlBreExtrVillb}, $\phi(a,bc)=\phi(ab,c)$ for every $a,b,c\in A$. That is,
$$T(a^*)^*T(bc)=T((ab)^*)^*T(c),$$ for every $a,b,c\in A$.
Taking $a=1$ in the above identity, we have
\begin{equation}\label{albre}T(bc)=T(1)^*T(bc)=T(b^*)^*T(c),\end{equation}
for every $b,c\in A$. In particular, given $c=b=b^*$ in $A$,  it follows that $T(b^2)=T(b)^*T(b)$, for every $b=b^*$ in $A$. Therefore, $T$ sends positive elements to positive elements, and hence $T$ is symmetric. Finally, since $T$ is symmetric, identity (\ref{albre}) can be rewritten as $T(bc)=T(b)T(c), \quad(\mbox{for all } b,c\in A),$
which  shows that $T$ is multiplicative, as desired.
\end{proof}

\begin{remark} \label{remcstarb-0-1} Let $T: A\to B$ be a linear map between C$^*$-algebras, where $A$ is unital. We observe that if conditions $(a)$ and $(b)$ in the above Theorem \ref{cstara-0-1} are replaced by
\begin{enumerate}[$(a^{\prime})$]
\item $a^*b=1\Rightarrow T(a)^*T(b)=T(1)$,
\item $cd^*=0\Rightarrow T(c)T(d)^*=0$,
\end{enumerate} we also obtain that $T$ is a $^*$-homomorphism.

\end{remark}

As a direct consequence of Theorem \ref{cstara-0-1} we get the following corollary.

\begin{corollary}\label{c *hom at 0 and 1} Let $T: A\to B$ be a linear map between C$^*$-algebras, where $A$ is unital. The following conditions are equivalent:
\begin{enumerate}[$(a)$]
\item $T$ is a $^*$-homomorphism,
\item $T$  is a $^*$-homomorphism at $0$ and at $1$.$\hfill\Box$
\end{enumerate}
\end{corollary}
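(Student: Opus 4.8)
The plan is to establish the two implications separately; the forward direction is immediate and the reverse one reduces entirely to Theorem \ref{cstara-0-1}.

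For $(a)\Rightarrow(b)$ there is nothing to do: as noted immediately after Definition \ref{def *homo at a point}, every $^*$-homomorphism between C$^*$-algebras is a $^*$-homomorphism at every point of its domain, and in particular at $0$ and at $1$. For $(b)\Rightarrow(a)$ my strategy is to unpack the two pointwise hypotheses and match them with the two conditions appearing in Theorem \ref{cstara-0-1}. First I would invoke the assumption that $T$ is a $^*$-homomorphism at $1$: by Definition \ref{def *homo at a point} this means that $ab^*=1$ implies $T(a)T(b)^*=T(1)$, which is exactly condition $(a)$ of Theorem \ref{cstara-0-1}. Next I would invoke the assumption that $T$ is a $^*$-homomorphism at $0$: this gives that $c^*d=0$ implies $T(c)^*T(d)=T(0)$, and since $T$ is linear we have $T(0)=0$, so this reads $c^*d=0\Rightarrow T(c)^*T(d)=0$, which is precisely condition $(b)$ of Theorem \ref{cstara-0-1}. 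Quoting that theorem then yields that $T$ is a $^*$-homomorphism, closing the argument.

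There is no genuine obstacle here, since all the analytic work has already been carried out in the proof of Theorem \ref{cstara-0-1}; the only point worth flagging is the bookkeeping. What the argument actually consumes is just one of the two defining implications at each of the points $0$ and $1$ — namely the $ab^*=1$ half of the hypothesis at $1$ and the $c^*d=0$ half of the hypothesis at $0$ — while the complementary halves play no role. Equally, one could feed the $a^*b=1$ and $cd^*=0$ halves into the symmetric statement recorded in Remark \ref{remcstarb-0-1} and reach the same conclusion, so the result is insensitive to which of the two mirror formulations one selects.
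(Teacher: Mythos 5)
Your proof is correct and follows exactly the paper's approach: the corollary is stated there as a direct consequence of Theorem \ref{cstara-0-1}, obtained precisely by matching the hypothesis at $1$ with condition $(a)$ and the hypothesis at $0$ (via $T(0)=0$) with condition $(b)$. Your additional remark about which halves of the definition are consumed, and the alternative route through Remark \ref{remcstarb-0-1}, is accurate bookkeeping but not a different argument.
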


It is natural to ask whether the role played by 1 and 0 in the previous corollary can be also played by a non-zero projection and its complement.

\begin{corollary}\label{c star hom at two complementary projections} Let $A$ and $B$ be C$^*$-algebras, where $A$ is unital. Suppose $p$ is a projection in $A$, and let $T: A\to B$ be a linear map which is a $^*$-homomorphism at $p$ and at $1-p$. Then $T$ is a Jordan $^*$-homomorphism.
\end{corollary}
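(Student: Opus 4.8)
The plan is to reduce everything to Theorem \ref{t star-homomorphism at the unit element} by showing that the two hypotheses force $T$ to be a $^*$-homomorphism at the unit $1 = p + (1-p)$; the Jordan $^*$-homomorphism conclusion is then immediate from that theorem. So the entire task is to verify both defining conditions of Definition \ref{def *homo at a point} at the point $z=1$, and the whole argument rests on a simple splitting trick built on the decomposition $1 = p + (1-p)$ together with linearity of $T$.

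To check the first condition at $1$, suppose $a b^* = 1$ in $A$. Multiplying this relation on the left by $p$ and by $1-p$ gives $(pa) b^* = p(a b^*) = p$ and $\big((1-p)a\big) b^* = (1-p)(a b^*) = 1-p$. Since $T$ is a $^*$-homomorphism at $p$ and at $1-p$, the first condition of Definition \ref{def *homo at a point} applies to the pairs $(pa, b)$ and $\big((1-p)a, b\big)$ and yields $T(pa) T(b)^* = T(p)$ and $T\big((1-p)a\big) T(b)^* = T(1-p)$. Adding these two identities and using linearity of $T$, so that $T(pa) + T\big((1-p)a\big) = T(a)$ and $T(p) + T(1-p) = T(1)$, we obtain $T(a) T(b)^* = T(1)$.

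The second condition at $1$ is handled symmetrically, multiplying on the right instead. If $c^* d = 1$, then $c^* (dp) = (c^* d) p = p$ and $c^* \big(d(1-p)\big) = 1-p$, so the second condition of Definition \ref{def *homo at a point} at $p$ and at $1-p$ gives $T(c)^* T(dp) = T(p)$ and $T(c)^* T\big(d(1-p)\big) = T(1-p)$; summing and invoking linearity once more produces $T(c)^* T(d) = T(1)$. Hence $T$ is a $^*$-homomorphism at $1$, and Theorem \ref{t star-homomorphism at the unit element} shows that $T$ is a Jordan $^*$-homomorphism, as required.

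There is essentially no analytic difficulty; the one point worth emphasising is the choice of target. One should resist trying to prove directly that $T$ is multiplicative, or to expand $T(u)T(u)^*$ for a unitary $u$ through the Peirce pieces $pu$ and $(1-p)u$: the off-diagonal contributions there arise from products equal to $0$, which are not governed by the hypotheses at $p$ and $1-p$ (this is precisely why the conclusion can only be a Jordan $^*$-homomorphism, and not a $^*$-homomorphism, in accordance with Example \ref{example Pop}). The effective move is instead to multiply the \emph{relation} $a b^* = 1$ (respectively $c^* d = 1$) by $p$ and $1-p$ so that each resulting piece lands \emph{exactly} on one of the two points at which $T$ is assumed to be a $^*$-homomorphism, and then to reassemble by linearity; checking that the four products are precisely $p$, $1-p$, $p$, $1-p$ is immediate.
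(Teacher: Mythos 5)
Your proposal is correct and follows essentially the same route as the paper's own proof: multiply the relations $ab^*=1$ and $c^*d=1$ by $p$ and $1-p$, apply the hypotheses at those two points, add by linearity to conclude that $T$ is a $^*$-homomorphism at $1$, and invoke Theorem \ref{t star-homomorphism at the unit element}. The paper's argument is the same splitting trick, word for word in substance.
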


\begin{proof} Let us take $a,b,c,d\in A$ with $a b^* =1$ and $c^* d=1$. By hypothesis we have $$T(p) =  T(c^* d p) = T(c)^* T(dp),$$  and $$T(1- p) =  T(c^* d (1-p)) = T(c)^* T(d(1-p)),$$ which prove $T(1) = T(c)^* T(d)$. Similarly, the identities $T(p) =  T(p a b^*) = T(p a) T(b)^*,$ and  $T(1- p) =  T( (1-p)a b^*) = T((1-p)a) T(b)^*,$ imply that $T(1) = T(a) T(b)^*$. Therefore $T$ is a $^*$-homomorphism at $1$, and hence Theorem \ref{t star-homomorphism at the unit element} concludes that $T$ is a Jordan $^*$-homomorphism.
\end{proof}

\begin{problem}\label{problem  star hom at p and 1-p} Let $T: A\to B$ be a linear map between C$^*$-algebras, where $A$ is unital, and let $p$ be a non-zero projection in $A$. Suppose $T$ is a $^*$-homomorphism at $p$ and at $1-p$. Is $T$ a $^*$-homomorphism?
\end{problem}

\section{$^*$-homomorphisms at some distinguished unitary}\label{sec:3}

In the previous section we conducted a study of linear maps between C$^*$-algebras that are $^*$-homomorphisms at the unit element. Surprisingly, if the unit element is replaced by a unitary element the difficulties are more numerous.

\begin{theorem}\label{*-hom at unitary}  Let $T: A\to B$ be a bounded linear map between C$^*$-algebras, where $A$ is unital. Suppose that $T$ is a $^*$-homomorphism at a unitary $u$ in $A$. Then $T(1)$ is a projection, $T(1)T(a)=T(a)T(1)$, for every $a\in A$, and $T(1)T$ is a Jordan homomorphism.

Furthermore, if we also assume that $T(1) T(x) =T(x)$ for every $x\in A$ {\rm(}that is the case when $T(u)$ is a unitary{\rm)}, then $T$ is a Jordan homomorphism.
\end{theorem}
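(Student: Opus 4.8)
The plan is to run the differentiation technique from the proof of Theorem \ref{t star-homomorphism at the unit element}, now dragging along the factor $u$, and then to feed the resulting identities into the multiplicative-at-the-unit machinery behind Proposition \ref{mult-at-uno}. Since $u$ is unitary, for every unitary $v\in A$ the pairs $(uv,v)$ and $(v,vu)$ satisfy $(uv)v^{*}=u$ and $v^{*}(vu)=u$; hence the hypothesis yields $T(uv)T(v)^{*}=T(u)$ and $T(v)^{*}T(vu)=T(u)$. Taking $v=e^{ita}$ with $a=a^{*}$ gives the one-parameter families $T(ue^{ita})T(e^{ita})^{*}=T(u)$ and $T(e^{ita})^{*}T(e^{ita}u)=T(u)$ for all $t\in\mathbb{R}$, to which I would apply the same derivative computation as in Theorem \ref{t star-homomorphism at the unit element}.

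Differentiating at $t=0$ I expect the first-order identities $T(ua)T(1)^{*}=T(u)T(a)^{*}$ and $T(1)^{*}T(au)=T(a)^{*}T(u)$ for $a=a^{*}$, which complexify to $T(ux)T(1)^{*}=T(u)T(x^{*})^{*}$ and $T(1)^{*}T(xu)=T(x^{*})^{*}T(u)$ for all $x\in A$. Specialising $x=u^{*}$ and using $uu^{*}=u^{*}u=1$ produces the two C$^{*}$-identities $T(1)T(1)^{*}=T(u)T(u)^{*}$ and $T(1)^{*}T(1)=T(u)^{*}T(u)$, while the case $t=0$ gives $T(u)T(1)^{*}=T(u)=T(1)^{*}T(u)$. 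Reading these together, $T(1)$ and $T(u)$ share both moduli and $T(1)^{*}$ is a two-sided unit for $T(u)$; combining this with the boundedness of $T$ and the Russo--Dye argument of Proposition \ref{p automatic continuity of +hom at unit}, I would conclude that $T(1)$ is a self-adjoint idempotent, that is, a projection, and that $T(1)$ commutes with $T(A)$.

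To obtain that $T(1)T$ is a Jordan homomorphism I would take a second derivative at $t=0$ of $T(ue^{ita})T(e^{ita})^{*}=T(u)$ and simplify it with the first-order identity above, producing a twisted square identity of the form $T(ua)T(a)^{*}=T(u)T(a^{2})^{*}$; once $T(1)$ is known to be a projection commuting with the range, a computation parallel to the end of Theorem \ref{t star-homomorphism at the unit element} should show that $T(1)T$ preserves squares of self-adjoint elements and hence is a Jordan homomorphism. Conceptually the same output is available through a reduction: because $u$ is invertible, $ac=1$ is equivalent to $a(cu)=u$, so the defining relations rewrite as a Chebotar-type condition of the form \eqref{Chebo} for $T$ relative to the auxiliary linear map $x\mapsto T(x^{*})^{*}$ twisted by $u$, and the results of \cite{BuMaMo15} underlying Proposition \ref{mult-at-uno} then give directly that $T(1)$ commutes with $T(A)$ and that $T(1)T$ is a Jordan homomorphism.

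For the final assertion, if $T(1)T(x)=T(x)$ for every $x$ then $T=T(1)T$ is a Jordan homomorphism by the above. Moreover, when $T(u)$ is unitary one has $T(u)T(u)^{*}=T(u)^{*}T(u)=1$, so the two C$^{*}$-identities force $T(1)T(1)^{*}=T(1)^{*}T(1)=1$; thus the projection $T(1)$ is simultaneously unitary and therefore equals the unit of the C$^{*}$-algebra generated by $T(A)$, whence $T(1)T(x)=T(x)$. The step I expect to be the real obstacle is precisely the upgrade from ``idempotent'' to ``projection'': for $u=1$ the positivity $T(1)=T(1)T(1)^{*}$ was automatic in Lemma \ref{l 1 star-homomorphism at the unit element a}, whereas here the self-adjointness of $T(1)$ has to be teased out of the moduli identities $T(1)T(1)^{*}=T(u)T(u)^{*}$ and $T(1)^{*}T(1)=T(u)^{*}T(u)$ together with the one-sided unit relations, and this is the delicate point.
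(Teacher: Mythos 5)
Your plan follows the same route as the paper's proof: instantiate the hypothesis on the unitary pairs $(uv,v)$ and $(v,vu)$, take $v=e^{ita}$ with $a=a^*$, differentiate at $t=0$, linearize, and specialize at $x=u^*$; the identities you obtain are exactly the paper's \eqref{ustar}, \eqref{unit-8} and \eqref{proj}, and your second-derivative identity $T(ua)T(a)^*=T(u)T(a^2)^*$ is \eqref{unit-jordan b}. The genuine gap is that the two pivotal claims on which everything rests --- that $T(1)$ is a projection and that $T(1)$ commutes with $T(A)$ --- are asserted rather than proved: you write ``I would conclude,'' you explicitly concede that self-adjointness of $T(1)$ is ``the delicate point'' you have not resolved (note that even idempotency is not automatic here, unlike the case $u=1$), and for commutation you offer no derivation at all; the ingredients you cite (boundedness and Russo--Dye) do not by themselves yield it. The paper closes both points by using more instances of the hypothesis than you invoke: the pairs $(u,u^2)$ and $(u^2,u)$ give \eqref{u-square}, hence $T(u)T(1)=T(1)T(u)=T(u)$ in \eqref{1-mult}; a partial-isometry bootstrap $T(u)=T(u)T(1)=T(u)T(1)^*T(1)=T(u)T(u)^*T(u)$ and $T(1)^*=T(1)^*T(1)T(1)^*=T(u)^*T(u)T(1)^*=T(u)^*T(u)$ then gives the projection; and commutation comes from the two further unitary families \eqref{unit-3}--\eqref{unit-4}, combined into \eqref{unit-5}--\eqref{unit-7} and \eqref{comm}. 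Your alternative shortcut through Proposition \ref{mult-at-uno} and \cite{BuMaMo15} is also unsubstantiated: after the substitution $ac=1\Leftrightarrow a(cu)=u$ the condition couples two \emph{different} maps, $T$ and $x\mapsto T(x^*)^*$ twisted by $u$, with target value $T(u)$ rather than $T(1)$, and the quoted results treat a single map multiplicative at the unit, so they do not apply.

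For what it is worth, your gap is fillable from the identities you already derived, so the extra families are not strictly needed. Write $p=T(1)$, $w=T(u)$; your relations read $wp^*=w=p^*w$, $pp^*=ww^*$, $p^*p=w^*w$. Then $ww^*=wp^*pw^*=w(w^*w)w^*=(ww^*)^2$, so $ww^*=pp^*$ is a projection; consequently $p=pp^*p=(ww^*)p=w(w^*p)=ww^*$ (using $w^*p=w^*$, the adjoint of $p^*w=w$), so $p$ is a projection and $p=ww^*=w^*w$. Commutation then follows from your two complexified identities alone: $T(z)p=T(z)w^*w=\left(pT(uz^*)^*\right)w=p\left(T(uz^*)^*w\right)=p\,pT(z)=pT(z)$, where the second equality is the adjoint of $T(ux)p=wT(x^*)^*$ at $x=z^*$ and the third is $pT(xu)=T(x^*)^*w$ at $x=zu^*$. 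With these two facts in hand, your twisted square identity together with the $x=au^*$ case of \eqref{unit-8} gives $T(a^2)T(1)=T(a)T(ua)^*T(u)=T(a)T(1)T(a)=T(a)^2T(1)$, and the final assertion about unitary $T(u)$ is handled exactly as you say. So the architecture is right, but as written the proposal stops short precisely at the theorem's crux and would need the computations above to count as a proof.
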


\begin{proof}
By hypothesis,
\begin{equation}\label{ustar}T(u)=T(1^*u)=T(1)^*T(u), \hbox{ and } T(u)=T(u1^*)=T(u)T(1)^*.\end{equation}
Moreover
\begin{equation}
\label{u-square}T(u)=T(u^*u^2)=T(u)^*T(u^2), \quad T(u)=T(u^2u^*)=T(u^2)T(u)^*.
\end{equation}
Merging these equations we prove
$$T(1)T(u)=T(1)T(u)^*T(u^2)=T(u)^*T(u^2)=T(u),$$
and similarly \begin{equation}\label{1-mult}T(u)T(1)=T(u).\end{equation}

Let $v$ be a unitary element in $A$. By assumptions we have
\begin{equation}\label{unit-1}T(u)=T(v^*(vu))=T(v)^*T(vu),
\end{equation}
\begin{equation}\label{unit-2}T(u)=T((uv)v^*)=T(uv)T(v)^*,
\end{equation}
\begin{equation}\label{unit-3}T(u)=T((vu^*)^*v)=T(vu^*)^*T(v),
\end{equation} and
\begin{equation}\label{unit-4}T(u)=T(v(u^*v)^*)=T(v)T(u^*v)^*,
\end{equation}\smallskip

From identities (\ref{unit-4}) and (\ref{unit-1}) we get
\begin{equation}\label{unit-5}T(u)^*T(vu)=T(u^*v)T(v)^*T(vu)=T(u^*v)T(u).
\end{equation}
Similarly, from identities (\ref{unit-3}) and (\ref{unit-2}) we get
\begin{equation}\label{unit-6}T(uv)T(u)^*=T(uv)T(v)^*T(vu^*)=
T(u)T(vu^*).
\end{equation}
Having in mind again that every element $a\in A$, with $||a||<\frac{1}{2}$ is a convex combination of four unitary elements, we conclude that
\begin{equation}\label{unit-7}
T(u)^*T(xu)=T(u^*x)T(u), \quad T(ux)T(u)^*=T(u)T(xu^*),
\end{equation}or equivalently,
$$T(u)^*T(x)=T(u^*xu^*)T(u), \quad T(x)T(u)^*=T(u)T(u^*xu^*),$$
for every $x\in A$. In particular,
\begin{equation}\label{comm}T(u)T(u)^*T(x)=T(u)T(u^*xu^*)T(u)=T(x)T(u)^*T(u),\end{equation} for every $x\in A$.
Equations \eqref{ustar} and \eqref{comm} show that $T(u)^*T(u)=T(u)T(u)^*$.\smallskip

Let us take $a=a^*$ in $A$ and $t\in \mathbb{R}$. By choosing in (\ref{unit-1}) and (\ref{unit-2}) the unitary  element $v=e^{ita}$  we conclude that $$ T(u) = T(e^{ita})^*T (e^{ita}u) ,\hbox{ and } T(u) = T(ue^{ita}) T(e^{ita})^*,$$ for every real $t$. Taking the first derivative in $t$ we get
 \begin{equation}\label{unit-eq first derivative a} 0= -i T(a e^{ita})^* T(e^{ita}u) +i T(e^{ita})^* T(a e^{ita}u),
\end{equation} and \begin{equation}\label{unit-eq first derivative b} 0= i T(ua e^{ita})T(e^{ita})^* -i T(ue^{ita})T(a e^{ita})^*,
\end{equation} for every real $t$.
Replacing $t$ by $0$ in the above identities, we obtain
$$T(a)^*T(u)=T(1)^*T(au), \quad T(u)T(a)^*=T(ua)T(1)^*,$$ for every $a=a^*$ in $A$. By linearizing we obtain:
\begin{equation}\label{unit-8}T(x^*)^*T(u)=T(1)^*T(xu), \quad T(u)T(x^*)^*=T(ux)T(1)^*,\end{equation} for every $x$ in $A$.
For $x=u^*$, the above equalities give
\begin{equation}\label{proj}T(u)^*T(u)=T(1)^*T(1), \quad T(u)T(u)^*=T(1)T(1)^*.\end{equation}

From identitities  (\ref{ustar}), (\ref{1-mult}) and (\ref{proj}) it follows that
$$T(u)=T(u)T(1)=T(u)T(1)^*T(1)=T(u)T(u)^*T(u).$$
This shows that $T(u) $ (and hence $T(1)$) is a partial isometry. Furthermore
$$T(1)^*=T(1)^*T(1)T(1)^*=T(u)^*T(u)T(1)^*=T(u)^*T(u)=T(1)^*T(1),$$ which shows that $T(1)$ is  in fact a projection.\smallskip

Now, from (\ref{comm}) and (\ref{proj}) we get \begin{equation}\label{eq Ta and Tx commute} T(1)T(x)=T(u)T(u)^*T(x)=T(x)T(u)^*T(u)=T(x)T(1),
\end{equation} for all $x\in A$.\smallskip

Taking a new derivative at $t=0$ in (\ref{unit-eq first derivative a}) and (\ref{unit-eq first derivative b}) we obtain that \begin{equation}\label{unit-eq second derivative a} 0= - T(a^2 )^* T(u) + T(a ) ^ *T(au )+ T(a )^* T(au ) - T(1) ^*T(a^2u),
\end{equation} and \begin{equation}\label{unit-eq second derivative b} 0= - T(ua^2 )T(1)^*+ T(ua) T(a ) ^*+  T(ua)T(a)^* - T(u)T(a^2)^*,
\end{equation} for every $a= a^*$ in $A$.
Having into account (\ref{unit-8}) in the last two equalities we deduce that
\begin{equation}
\label{unit-jordan a} T(a^2)^*T(u)=T(1)^*T(a^2u)=T(a)^*T(au),
\end{equation}and
\begin{equation}
\label{unit-jordan b} T(u)T(a^2)^*=T(u a^2)T(1)^*=T(ua)T(a)^*,
\end{equation} for all $a=a^*$ in $A$.\smallskip

Now, since $T(1)$ is a projection, we know from \eqref{eq Ta and Tx commute} and the first equality in \eqref{unit-8} (with $x= a u^*$) that $$T(a)T(1)=T(1)T(a)=T(ua)^*T(u) = T(1)^* T(a)= T(1) T(a),$$ and thus by \eqref{unit-jordan b}
$$T(a^2)T(1)=T(a^2)T(u)^*T(u)=T(a)T(ua)^*T(u)= T(a) T(1) T(a) =T(a)^2T(1), $$ for every $a=a^*$ in $A$. We have therefore proved that $T(1)$ is a projection commuting with $T(A)$ and that $T(1)T$ is a Jordan homomorphism.\smallskip

For the final statement, we observe that if $T(u)$ is a unitary, then, by \eqref{proj}, $T(1)$ also is a unitary in $B$ and hence $T(1)=1$. The rest of the statement is clear.
\end{proof}

We can obtain a better conclusion by strengthening the hypothesis in the previous theorem.

\begin{corollary}\label{c *hom at 0 and u} Let $T: A\to B$ be a bounded linear map between C$^*$-algebras, where $A$ is unital, and let $u$ be a unitary element in $A$. If $T$  is a $^*$-homomorphism at $0$ and at $u$ then $T(1)$ is a projection, $T(1)$ commutes with $T(A)$, and $T(1)T$ is a $^*$-homomorphism. If we also asume that $T(1) T(x) = T(x)$, for every $x\in A$ then $T$ is a $^*$-homomorphism.
\end{corollary}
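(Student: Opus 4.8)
The plan is to isolate the content of each of the two hypotheses separately and then glue the conclusions together on the map $S:=T(1)T$, in the spirit of the proof of Theorem \ref{cstara-0-1}. First I would feed the $^*$-homomorphism-at-$u$ hypothesis into Theorem \ref{*-hom at unitary} (legitimate, since $T$ is bounded): that theorem yields at once that $p:=T(1)$ is a projection, that $p$ commutes with $T(A)$ --- so that $S(a)=p\,T(a)=T(a)\,p=p\,T(a)\,p\in pBp$ --- and that $S$ is a Jordan homomorphism. This settles the first two assertions and reduces the task to promoting ``Jordan homomorphism'' first to a symmetric map and then, under the extra hypothesis, to a multiplicative one.

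Next I would exploit the $^*$-homomorphism-at-$0$ hypothesis, which is exactly condition $(b)$ of Theorem \ref{cstara-0-1}: $c^*d=0$ forces $T(c)^*T(d)=0$. Copying that argument, the bounded bilinear map $\phi(a,b):=T(a^*)^*T(b)$ vanishes whenever $ab=0$, so \cite[Theorem 2.11]{AlBreExtrVillb} gives $\phi(a,bc)=\phi(ab,c)$ for all $a,b,c$. Taking $a=1$ and using $T(1)^*=p$ produces the key identity
\begin{equation}\label{eq plan key}
S(bc) = T(1)\, T(bc) = T(b^*)^*\, T(c), \qquad b,c\in A.
\end{equation}
Specialising \eqref{eq plan key} to $b=c^*$ gives $S(c^*c)=T(c)^*T(c)\ge 0$, so $S$ carries positive elements to positive elements; being positive it is symmetric, and being already a Jordan homomorphism it is a Jordan $^*$-homomorphism.

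Finally I would read multiplicativity off \eqref{eq plan key}. The case $c=1$ of \eqref{eq plan key} gives $S(b)=T(b^*)^*p$, while $S(c)=p\,T(c)$; since $p$ commutes with $T(A)$ this yields $S(b)S(c)=T(b^*)^*p\,T(c)$, and hence
\[
S(bc) - S(b)\,S(c) = T(b^*)^*\,(1-p)\,T(c), \qquad b,c\in A.
\]
Thus $S$ is multiplicative precisely when the defect $T(b^*)^*(1-p)T(c)$ vanishes identically, and choosing $b=c^*$ shows this happens if and only if $(1-p)T(c)=0$ for every $c$, i.e. if and only if $T(1)T(x)=T(x)$ for all $x$ (which holds in particular when $T(u)$ is unitary, since then $p$ is unitary and $p=1$). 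Under that additional assumption $T=S$ maps into $pBp$, the defect disappears, and symmetry ($T(b^*)^*=T(b)$) collapses \eqref{eq plan key} to $T(bc)=T(b^*)^*T(c)=T(b)T(c)$; hence $T$ is a $^*$-homomorphism.

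The step I expect to be the crux is the recognition that the at-$0$ hypothesis, through the Alaminos--Bre\v{s}ar--Extremera--Villena zero-product theorem, delivers only the one-sided identity \eqref{eq plan key}, and that the gap between a Jordan $^*$-homomorphism and a genuine $^*$-homomorphism is measured exactly by the off-diagonal term $T(b^*)^*(1-T(1))T(c)$. Taming this term is precisely what the hypothesis $T(1)T=T$ provides; in its absence the same computation yields no more than the Jordan $^*$-homomorphism conclusion for $T(1)T$, so the multiplicativity upgrade genuinely needs the extra assumption $T(1)T(x)=T(x)$.
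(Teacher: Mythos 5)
Your first three steps are sound and run essentially parallel to the paper's proof: Theorem \ref{*-hom at unitary} gives that $p=T(1)$ is a projection commuting with $T(A)$ and that $S=T(1)T$ is a Jordan homomorphism; the at-$0$ hypothesis plus \cite[Theorem 2.11]{AlBreExtrVillb} gives the key identity $S(bc)=T(b^*)^*T(c)$; and your positivity argument for the symmetry of $S$ is a legitimate (arguably cleaner) substitute for the paper's derivation, which instead recycles identity \eqref{unit-7} from inside the proof of Theorem \ref{*-hom at unitary}. The genuine gap is in your last step. The corollary asserts \emph{unconditionally} that $T(1)T$ is a $^*$-homomorphism; you only establish that it is a Jordan $^*$-homomorphism, and you then claim that multiplicativity of $S$ ``genuinely needs'' the extra assumption $T(1)T=T$ because the defect $T(b^*)^*(1-p)T(c)$ need not vanish. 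That claim is false, so your proposal proves strictly less than the statement.

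The defect always vanishes, and the ingredients are already on your table. Left-multiply your key identity $p\,T(bc)=T(b^*)^*T(c)$ by $p$: the left-hand side is unchanged, so
\begin{equation*}
T(b^*)^*T(c)\;=\;p\,T(b^*)^*T(c).
\end{equation*}
Since $p$ commutes with every $T(x)$, it also commutes with every $T(x)^*$, whence $p\,T(b^*)^*=T(b^*)^*p$; and taking adjoints in the symmetry relation $p\,T(b^*)=T(b)^*p$ gives $T(b^*)^*p=p\,T(b)$. Hence
\begin{equation*}
T(b^*)^*T(c)\;=\;T(b^*)^*p\,T(c)\;=\;p\,T(b)\,T(c),
\end{equation*}
which says exactly that $T(b^*)^*(1-p)T(c)=0$ for all $b,c$, i.e. $S(bc)=S(b)S(c)$, closing the gap and recovering the paper's conclusion (the paper reaches the same point by combining its identity $T(x)T(1)=T(x^*)^*T(1)$ with the commutation and projection properties). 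Note, moreover, that your own observation at $b=c^*$ then works for you rather than against you: vanishing of the defect forces $(1-p)T(c)=0$ for every $c$, so the hypothesis $T(1)T(x)=T(x)$ in the final sentence is in fact automatically satisfied under the two at-point assumptions, and $T=S$ is itself a $^*$-homomorphism; in particular, that hypothesis is \emph{not} what separates the Jordan conclusion from the multiplicative one.
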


\begin{proof}
We know from Theorem \ref{*-hom at unitary} that $T(1)$ is a projection, $T(1)T(a)=T(a)T(1)$, for every $a\in A$, and that $T(1)T$ is a Jordan homomorphism. Moreover, from identities (\ref{unit-7})
\begin{equation}\label{unit-7-1} T(u)^*T(xu)=T(u^*x)T(u),
\end{equation}
for every $x\in A$.
Arguing as in Theorem \ref{cstara-0-1}, the map $\phi:A\times A\to B$ defined by $\phi(a,b)=T(a^*)^*T(b),$ for all $a,b \in A$ is a bilinear bounded map and  satisfies
$$ab=0\Rightarrow \phi(a,b)=0.$$
By \cite[Theorem 2.11]{AlBreExtrVillb}, $\phi(a,bc)=\phi(ab,c)$ for every $a,b,c\in A$. That is,
\begin{equation}\label{starO} T(a^*)^*T(bc)=T((ab)^*)^*T(c),
\end{equation} for every $a,b,c\in A$. In particular, given $x \in A$, it follows from the last identity that
\begin{equation}\label{unit-7-2}T(u)^*T(xu)=T((u^*x)^*)^*T(u)=T(x^*u)^*T(u).\end{equation}
Merging identities (\ref{unit-7-1}) and (\ref{unit-7-2}) we obtain
$$T(u^*x)T(u)=T(x^*u)^*T(u),$$
or equivalently, multiplying by $T(u)^*$ on the right  and replacing $x$ by $ux$, the identity $$T(x)T(1)=T(x^*)^*T(1),$$ holds for every $x\in A$.
This shows that $T(1)T$ is a $^*$-homomorphism.
Finally, identity (\ref{starO}) ensures that
$T(1)T(bc)=T(b)T(c),$ for every $b,c$ in $A$. Since $T(1)$ is a projection commuting with $T(A)$, it follows that $T(1)T$ is multiplicative, as desired. The rest is clear.
\end{proof}


\end{document}